\newtheorem{theorem}{Theorem}[section]
\newtheorem{lemma}[theorem]{Lemma}
\newtheorem{proposition}[theorem]{Proposition}
\theoremstyle{definition}
\newtheorem{definition}[theorem]{Definition}
\newtheorem{example}[theorem]{Example}
\newtheorem{remark}[theorem]{Remark}
\newtheorem{question}[theorem]{Question}
\theoremstyle{remark}
\renewenvironment{proof}{{\bfseries \noindent Proof.}}{~~~~$\square$}
\def\th@newremark{\th@remark\thm@headfont{\bfseries}}
\begin{document}
\baselineskip 9mm

{

\vspace*{9mm}

\begin{center}

{\Large \bf 
ON MATCHING PROPERTY FOR GROUPS AND FIELD EXTENSIONS}

\let\thefootnote\relax\footnote{\scriptsize 2010 \textit{Mathematics Subject Classifcation.} Primary: 05D15; Secondary: 11B75, 20D60, 20F99,12F99.}
\let\thefootnote\relax\footnote{\scriptsize \textit{Key words and phrases.} Acyclic matching property for groups of prime order, Linear acyclic matching property, Matchings in arbitrary groups and field extensions, Matchings under group homomorphisms.}

{\bf MOHSEN ALIABADI, MAJID HADIAN, AND AMIR JAFARI}
\vspace{2mm}

\end{center}

\vspace{4mm}

{\noindent \bf Abstract.}  In this paper we prove a sufficient condition for the existence of matchings in
arbitrary groups and its linear analogue, which lead to some generalizations of the existing
results in the theory of matchings in groups and central extensions of division rings. We
introduce the notion of relative matchings between arrays of elements in groups and use
this notion to study the behavior of matchable sets under group homomorphisms. We
also present infinite families of prime numbers $p$ such that $\mathbb{Z}/p\mathbb{Z}$ does not have the acyclic
matching property. Finally, we introduce the linear version of acyclic matching property
and show that purely transcendental eld extensions satisfy this property.

\tableofcontents

\section*{Introduction }
\addcontentsline{toc}{section}{Introduction }
\label{intro} 
Let $B$ be a finite subset of the group $\mathbb{Z}^n$ which does not contain the neutral element. For any subset $A$ in $\mathbb{Z}^n$ with the same cardinality as $B$, a matching from $A$ to $B$ is defined to be a bijection $f : A \to B$ such that for any $a\in A$ we have $a + f(a) \notin A$. For any matching $f$ as above, the associated multiplicity function $m_f : \mathbb{Z}^n \to \mathbb{Z}_{\geq 0}$ is defined via the rule:
\[
\forall x\in \mathbb{Z}^n, m_f (x) =|\{ a \in A : a + f(a) = x\} |.
\]
A matching $f : A \to B$ is called acyclic if for any matching $g : A \to B, m_f = m_g$ implies
$f = g$. Now the question is, fixing finite subsets $A$ and $B$ in $\mathbb{Z}^n$ with same cardinality such that $0\notin B$, is there an acyclic matching from $A$ to $B$? This question and the related notions were studied in \cite{7} by Fan and Losonczy. Their motivation is the relation between acyclic matchings and an old problem concerning elimination of monomials in a generic homogenous form under a linear change of variables, which was studied by Wakeford in 1916 (see \cite{11}). More precisely, Fan and Losonczy in \cite{7,8} use the existence of acyclic matchings for subsets of $\mathbb{Z}^n$ in order to show that any small enough fixed set of monomials can be removed from a generic homogeneous form after a suitable linear change of variables.\\
Later, the notions of matchings and acyclic matchings were generalized and studied in the
context of arbitrary abelian and even non-abelian groups. Let $A$ and $B$ be two finite subsets of an arbitrary group $G$. A matching from $A$ to $B$ is a bijection $f : A\to B$ such that for all
$a \in A, af(a) \notin A$. Evidently, it is necessary for the existence of a matching from $A$ to $B$ that $|A| = |B|$ and that $e \notin B$ (here $e$ denotes the neutral element of $G$). One says that a group $G$ has the matching property if these necessary conditions are sufficient as well. Moreover, the notions of multiplicity function associated to a matching and acyclic matchings are defined similar to the case $G =\mathbb{Z}^n$ mentioned above. $A$ group $G$ has the acyclic matching property if for any pair of subsets $A$ and $B$ in $G$ with $|A| =|B|$ and $e \notin B$, there is at least one acyclic matching from $A$ to $B$.\\
It is shown in \cite{7} that any free abelian group satisfies the acyclic matching property. As for
the matching property, Losonczy proves in \cite{8} that an abelian group satisfies the matching
property if and only if it is either torsion free or finite of prime order. This latter result of Losonczy has been generalized for arbitrary groups by Eliahou and Lecouvey (see \cite{5}). We would like to mention that, although all finite groups of prime order are known to satisfy the matching property, the classification of those prime numbers $p$ such that $\mathbb{Z}/p\mathbb{Z}$ has the acyclic matching property is unsolved.\\
In this paper, we prove a suficient condition for the existence of matchings in arbitrary groups, which leads to some generalizations of the above mentioned results concerning the matching property for groups. In particular, we prove a result which provides us with a systematic way for constructing matchings between subsets of groups which are not necessarily torsion free or of prime order. In order to deal with these groups, we will introduce the generalization of matchings between subsets to matchings between arrays of elements in groups relative to a normal subgroup and use this notion in studying the behavior of matchings under group homomorphisms. We also present infinite families of prime numbers p such that $\mathbb{Z}/p\mathbb{Z}$ fails to satisfy the acyclic matching property. On the other hand, we are not able to prove or disprove the existence of an infinite family of primes p such that $\mathbb{Z}/p\mathbb{Z}$ does satisfy
the acyclic matching property.\\
A related notion is that of a matching between subspaces of a central extension of division
rings. In \cite{6}, Eliahou and Lecouvey formulate some linear analogues of matchings in groups
and prove some similar results in the linear context. We also extend our results on matchings
in groups to the linear setting, which generalize some results of \cite{6}. We conclude by introducing the linear analogue of an acyclic matching and show that pure transcendental extensions have the linear acyclic matching property. For more results on matching property see \cite{2,3,4}.\\
\textbf{Organization of the paper}: In section 1 we prove a suficient condition for the existence of
matchings in groups, which generalizes some of the known results in the theory of matchings
in groups. Then we introduce the notion of a relative matching between two arrays of elements
in a group, which is a generalization of the usual notion of matching. We use this in order to
study the behavior of matchable sets under group homomorphisms. In section 2 we construct
infinite families of prime numbers $p$ such that the group $\mathbb{Z}/p\mathbb{Z}$ fails to satisfy the acyclic matching property. In section 3 we formulate and prove the linear version of the main result of section 1. Finally, in section 4 we introduce the linear analogue of the acyclic matching property and prove that any purely transcendental field extension satisfies this property. This result is the linear counterpart of the fact that free abelian groups satisfy the acyclic matching
property.
\section{A sufficient condition for the existence of matchings}\label{sec1}
Our goal in this section is to prove a suficient condition for the existence of matchings
in arbitrary groups, which generalizes some of the known results concerning the matching
property mentioned in the introduction. We also introduce the notion of a matching between
two arrays of elements in a group and use it for a systematic construction of matchings between subsets of groups which are not necessarily torsion free or of prime order.\\
The idea behind our first result is the following simple observation which shows that
existence of nontrivial proper finite subgroups is an obstruction for the matching property.
Assume that a group $G$ contains a nontrivial proper finite subgroup $H$. Let $A = xH$ be any
right coset of H and put $B = (H \setminus \{e\})\cup \{g\}$ for any element $g \in G\setminus H$. Then $A$ and $B$ are finite subsets of $G$ with $|A| = |B|$ and $e\notin B$, and there is no matching from $A$ to $B$. Indeed, if an element $xh \in A$ is matched to an element $h'\in B \cap H$, then $xhh' \in xH = A$ and this contradicts the definition of matching. Inspired by this example, we want to prove that if A does not contain any coset of any nontrivial proper finite subgroup of $G$, then there always exists a matching from A to any subset $B$ with $|A| = |B|$ and $e\notin B$. For this purpose, we need the following result of Olson.
\begin{theorem}\label{th1.1}
\cite[Theorem 2]{10}
Let $X$ and $Y$ be two nonempty finite subsets of a group $G$ and put $XY = \{xy : x \in X; y \in Y \}$. Then there exists a finite subgroup $H$ of $G$ and a nonempty subset $T$ of $XY$ such that
\[
|T|\geq |X| + |Y |-|H|,
\]
and either $HT = T$ or $TH = T$.\\
Now we can prove the following suficient condition for existence of matchings.
\end{theorem}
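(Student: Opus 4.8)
The statement is Olson's theorem, a non-commutative analogue of Kneser's addition theorem, so the plan is to adapt the Kneser/Kemperman induction to a setting where left and right translations must be carefully distinguished. First I would record the symmetries of the assertion: replacing $(X,Y)$ by $(Xy_0, y_0^{-1}Y)$ for a fixed $y_0 \in Y$ leaves the product $XY$, the two cardinalities, and the entire conclusion unchanged, so I may assume $e \in Y$; and the two alternatives $HT=T$ and $TH=T$ are interchanged by passing to the opposite group, so it suffices to produce one of them. With these normalizations in place I would argue by induction on $|Y|$. The base case $|Y|=1$ is immediate: then $Y=\{e\}$ and $XY = X$ has cardinality $|X| = |X|+|Y|-1$, so taking $H=\{e\}$ and $T = XY$ works.

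For the inductive step I would introduce the one-sided stabilizers $\mathrm{Stab}_L(S) = \{g \in G : gS = S\}$ and $\mathrm{Stab}_R(S) = \{g \in G : Sg = S\}$; for finite nonempty $S$ these are finite subgroups, and $S$ is a union of right cosets of $\mathrm{Stab}_L(S)$ and of left cosets of $\mathrm{Stab}_R(S)$. The argument then rests on a dichotomy applied to $P = XY$. If $P$ already satisfies $|P| \ge |X|+|Y|-|H|$ with $H = \mathrm{Stab}_L(P)$, I am done by taking $T = P$, since $HP=P$. Otherwise $P$ is \emph{deficient}, and I would peel off one element: fix $y_0 \in Y$ with $y_0 \ne e$, set $Y' = Y \setminus \{y_0\}$, and apply the inductive hypothesis to $(X, Y')$ to obtain a finite subgroup $H$ and a nonempty $T' \subseteq XY'$ with $|T'| \ge |X|+|Y|-1-|H|$ and, say, $HT'=T'$. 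It then remains to recover the single unit lost in passing from $|Y|$ to $|Y|-1$: inside the extra slice $Xy_0 \subseteq XY$ I would try to locate a full left $H$-coset disjoint from $T'$ and adjoin it to $T'$, or, failing that, argue that this failure forces a strictly larger stabilizer and re-run the construction with the bigger subgroup. Adjoining one coset raises the count by $|H|$, comfortably restoring the bound $|T|\ge |X|+|Y|-|H|$ while preserving $HT=T$.

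The main obstacle is precisely this recovery step, and it is where non-commutativity makes the problem genuinely harder than Kneser's theorem. In the abelian case one has the Dyson transform $X' = X \cup cY$, $Y' = c^{-1}X \cap Y$, which preserves $|X'|+|Y'| = |X|+|Y|$ and satisfies $X'Y' \subseteq XY$; the containment of the cross term uses commutativity essentially, since $Y' \subseteq c^{-1}X$ gives $cY' \subseteq X$ and hence $(cY)Y' = Y(cY') \subseteq YX = XY$. In a non-abelian group the term $(cY)Y'$ cannot be reorganized into a subset of $XY$, so the classical transform is unavailable and one is forced to track left and right cosets separately. This is exactly why the conclusion is the disjunction $HT=T$ \emph{or} $TH=T$ rather than a two-sided periodicity: the subgroup $H$ need not be normal, and the delicate point is to guarantee that the coset adjoined to enlarge $T$ both lies inside $XY$ and is invariant under the one chosen side of $H$. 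Controlling the interaction between the left-coset decomposition coming from $\mathrm{Stab}_L$ and the right translate $Xy_0$ is the crux; once the bookkeeping of cosets of a possibly non-normal $H$ is arranged so that the deficient case always yields either an enlargeable coset or a strictly larger stabilizer, the induction closes.
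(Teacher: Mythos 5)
The first thing to note is that the paper contains no proof of this statement at all: Theorem \ref{th1.1} is quoted verbatim from Olson \cite[Theorem 2]{10} and used as a black box, so your attempt has to stand on its own as a proof of Olson's theorem. As written, it does not. The normalization $e\in Y$ and the base case $|Y|=1$ are fine, but the inductive step is a plan rather than an argument. After applying the induction hypothesis to $(X, Y\setminus\{y_0\})$ you obtain \emph{some} finite subgroup $H$ and a left-periodic $T'\subseteq XY'$, and the entire burden then falls on your ``recovery step'': locating a full left $H$-coset inside $Xy_0$ disjoint from $T'$, ``or, failing that,'' concluding that some stabilizer strictly grows. Neither alternative is proved, and neither is automatic: the subgroup $H$ handed to you by induction has no relation to $X$ or $y_0$, so the right translate $Xy_0$ need not contain any full left $H$-coset whatsoever; disjointness from $T'$ is likewise uncontrolled; and the fallback claim that failure forces a strictly larger stabilizer is not even formulated precisely (stabilizer of which set, on which side?), let alone established. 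You flag this step yourself as ``the main obstacle,'' but it is not a gap to be patched later --- it is the entire content of the theorem. Olson's actual proof handles it via a nonabelian Kemperman-type transform and a delicate case analysis tracking left and right periodicity simultaneously, not by adjoining cosets to the output of a naive induction on $|Y|$.

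There is also a secondary flaw in your symmetry reduction. Passing to the opposite group $G^{\mathrm{op}}$ turns the pair $(X,Y)$ into $(Y,X)$ and swaps left with right periodicity, so to ``produce only one of them'' you would need the strictly stronger one-sided statement --- that for \emph{every} pair there exist $H$ and $T$ with $HT=T$ and $|T|\geq |X|+|Y|-|H|$ --- to hold universally, which is not something you may assume; the conclusion is a disjunction precisely because the argument yields sometimes left- and sometimes right-periodic sets, and the two cases cannot be merged by duality. Your diagnosis of why the Dyson transform breaks down nonabelianly is accurate and shows you understand where the difficulty lives, but identifying the obstruction is not the same as overcoming it: the proposal as it stands proves only the easy reductions surrounding a hole that coincides with the theorem itself.
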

\begin{theorem}\label{th1.2}
Let $G$ be an arbitrary group and A be a finite subset of $G$ which does not contain any (left or right) coset of any proper nontrivial finite subgroup of $G$. Then for any finite subset $B$ of $G$ with $|A| = |B|$ and $e\notin B$, there is a matching from $A$ to $B$.
\end{theorem}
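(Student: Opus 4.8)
The plan is to translate the existence of a matching into a perfect matching problem in a bipartite graph and then apply Hall's marriage theorem, with Olson's theorem (Theorem~\ref{th1.1}) supplying the combinatorial input needed to verify Hall's condition. Concretely, I would form the bipartite graph on vertex classes $A$ and $B$ in which $a\in A$ is joined to $b\in B$ precisely when $ab\notin A$; a matching from $A$ to $B$ in the sense of the paper is exactly a perfect matching of this graph, and since $|A|=|B|$ it suffices, by Hall's theorem, to check that $|N(S)|\ge |S|$ for every $S\subseteq A$, where $N(S)$ denotes the neighbourhood of $S$. Arguing by contradiction, suppose some $S\subseteq A$ violates this and set $C=B\setminus N(S)$. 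By the definition of the graph, $b\in C$ means $sb\in A$ for all $s\in S$, i.e. $SC\subseteq A$; moreover $|C|=|B|-|N(S)|>|B|-|S|=|A|-|S|$, so $S$ and $C$ are nonempty with $SC\subseteq A$ and $|S|+|C|>|A|$. Note that $e\notin C$ since $C\subseteq B$.

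Next I would apply Olson's theorem to $X=S$ and $Y=C$, obtaining a finite subgroup $H$ and a nonempty $T\subseteq SC\subseteq A$ with $|T|\ge |S|+|C|-|H|$ and with $T$ a union of right cosets of $H$ (if $HT=T$) or of left cosets of $H$ (if $TH=T$). In either case $A$ contains at least one coset of $H$, so the hypothesis on $A$ is contradicted as soon as $H$ is nontrivial and proper. The case $H=G$ is easy to dispose of: then $T$, being a nonempty union of cosets of $G$, equals $G$, forcing $A=G$, which is incompatible with the existence of $B\subseteq G$ having $|B|=|A|=|G|$ and $e\notin B$. Finally, combining $|T|\le |SC|\le |A|$ with Olson's bound gives $|H|\ge |S|+|C|-|A|$, and the right-hand side is exactly the Hall deficiency $|S|-|N(S)|$ of $S$. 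Hence if a violating $S$ can be chosen with deficiency at least $2$, then $|H|\ge 2$ and we are finished.

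The remaining and genuinely delicate case is when every Hall-violating set has deficiency exactly $1$, i.e. $|S|+|C|=|A|+1$; here Olson may legitimately return the trivial subgroup, so that $T=SC=A$ is a critical pair with $|SC|=|S|+|C|-1$ and no coset is produced. I would first reduce the configuration by saturation: replacing $S$ with $\widehat S=\{a\in A:aC\subseteq A\}\supseteq S$ keeps $\widehat S C\subseteq A$ and increases $|S|+|C|-|A|$, the quantity that by the computation above bounds $|H|$ from below, so if $\widehat S\neq S$ this quantity is at least $2$ and the previous paragraph applies; thus one may assume $(S,C)$ is saturated on both sides. The two extreme shapes of such a critical pair are then immediate: $|S|=1$ is impossible, since it forces $C=B$ and $sB=A$, whence $B=s^{-1}A$ and $s\notin A$, contradicting $s\in S\subseteq A$; and $|C|=1$ forces $S=A$ and $Ac=A$, so that $A$ is a union of left cosets of the nontrivial finite cyclic group $\langle c\rangle$ (finite because $A$ is), again contradicting the hypothesis (or yielding $A=G$, excluded as above). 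The main obstacle is the general doubly-saturated critical pair with $|S|,|C|\ge 2$: I expect this to require the structure theory of critical (minimal-product) pairs in the spirit of the analysis underlying Olson's and Kemperman's theorems, or an induction on $|A|$ that peels the configuration down to the two extreme cases just treated.
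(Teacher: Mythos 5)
Your setup --- Hall's marriage theorem reducing the problem to a deficiency bound, Olson's theorem applied to a violating pair $(S,C)$ with $SC\subseteq A$, the elimination of $H=G$ via $e\notin B$, and the bound $|H|\ge |S|+|C|-|A|$ --- is exactly the skeleton of the paper's argument, and you have correctly isolated the one point where it does not yet close: when the Hall deficiency is $1$, Olson may return the trivial subgroup and no forbidden coset is produced. But your treatment of that case is a genuine gap. The saturation step and the two extreme shapes $|S|=1$, $|C|=1$ are fine, yet the main case of a doubly saturated critical pair with $|S|,|C|\ge 2$ is not proved; it is deferred to ``the structure theory of critical pairs in the spirit of Olson's and Kemperman's theorems'' or an unspecified induction. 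Kemperman-type structure results are not available for arbitrary (nonabelian, possibly infinite) groups in the form you would need, so as written the proof is incomplete precisely at its hardest point.

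The fix is a one-line device that makes the whole case analysis unnecessary, and it is what the paper does: before invoking Olson, adjoin the identity to $C$. Since $C\subseteq B$ and $e\notin B$, the set $W_S:=C\cup\{e\}$ has $|W_S|=|C|+1$, and $SW_S=SC\cup S\subseteq A$ still holds because $S\subseteq A$. The Hall violation gives $|S|+|C|\ge |A|+1$, hence $|S|+|W_S|-|A|\ge 2$, so Olson applied to $S$ and $W_S$ forces $|H|\ge 2$; combined with your (correct) exclusion of $H=G$, the subgroup $H$ is nontrivial, proper, and finite, and $A$ contains one of its cosets --- contradiction, with no residual case. (Equivalently: had you saturated $C$ over all of $G$ rather than inside $B$, i.e.\ taken $\{x\in G: Sx\subseteq A\}$, the identity would have joined $C$ automatically and you would have recovered this trick.) So the verdict is: right strategy, correct identification of the obstruction, but the proposal as it stands does not constitute a complete proof of the deficiency-one case.
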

\begin{remark}\label{re1.3}
Note that if $G$ is torsion free or finite of prime order, then the hypothesis of the above theorem is automatically satisfied for any finite subset $A$, since $G$ has no nontrivial proper finite subgroups at all. Therefore, the above theorem, in particular, implies that torsion free groups and finite groups of prime order have the matching property.
\end{remark}
\begin{proof}
(of Theorem \ref{th1.2}) Assume by the way of contradiction that there is no matching from
$A$ to $B$. For any element $a \in A$ let $M_a := \{b \in B : ab \notin A\}$ be the subset of $B$ consisting of those elements that are matchable to a. Then, Hall's marriage Theorem implies that there is a subset $S$ of $A$ such that:
\[
|S| > |\cup _{s\in S} M_s|
\]
By taking the complement and noticing that $A$ and $B$ have the same cardinality, this would
imply that:
\[
|A|-|S| < |\cap _{s\in S} M^c_s|,
\]
where $M^c_s$ denotes the complement of $M_s$ in $B$. Now, if we put $W_S := (\cap _{s\in S}M^c_s ) \{e\}$, we have $SW_S\subset A $ and
\begin{equation}\label{eq1}
|A| < |S| + |W_S|- 1.
\end{equation}
Theorem \ref{th1.1}, applied to the subsets $S$ and $W_S$ in the group $G$, implies that there is a finite subgroup $H$ of $G$ and a nonempty finite subset $T$ of $SW_S$ such that:
\begin{equation}\label{eq2}
|SW_S|\geq |T|\geq |S|+ |W_S|-|H|,
\end{equation}
and
\begin{equation}\label{eq3}
\mathrm{either}\; HT = T \mathrm{or}\; TH = T.
\end{equation}
If $H = G$, then \eqref{eq3} implies that $T = G$ and thus $SW_S = G$. On the other hand, $SW_S$ is a subset of $A$ and hence we have to have $A = G$. But, since $B$ has the same cardinality as $A$, this would imply that $B = G$, which contradicts the assumption $e\notin B$. Therefore, $H$ has to be a finite proper subgroup of $G$. Now, since either $TH = T$ or $HT = T$ and $T$ is a subset of $A$, we conclude that $A$ contains a coset of $H$. On the other hand, by our assumption, A does not contain any coset of any nontrivial finite proper subgroup of $G$. Therefore, $H$ has to be the trivial subgroup. But then \eqref{eq2} implies that:
\[
|A|\geq |SW_S|\geq |S| + |W_S|-|H| = |S| + |W_S|- 1,
\]
which contradicts the inequality \eqref{eq1}.
\end{proof}\\
Along the same line of ideas used in the above argument, we can prove the following generalization of \cite[Proposition 3.4]{8}. Note that \cite[Proposition 3.4]{8} can only be applied to cyclic groups, while the following result works for an arbitrary abelian group.
\begin{proposition}\label{pr1.4}
Let $G$ be any abelian group and $A$ and $B$ be finite subsets of $G$ with the same cardinality. Assume further that for any element $b\in B, A$ does not contain any coset of the subgroup generated by $b$. Then there is a matching from $A$ to $B$.
\end{proposition}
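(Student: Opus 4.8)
The plan is to run the same Hall-theoretic reduction used in the proof of Theorem~\ref{th1.2}, and then to replace the appeal to Olson's theorem by its sharper abelian refinement, namely Kneser's theorem. First I would argue by contradiction: assuming no matching from $A$ to $B$ exists, Hall's marriage theorem produces a subset $S\subseteq A$ with $|S|>|\bigcup_{s\in S}M_s|$, where $M_s=\{b\in B: s+b\notin A\}$. Passing to complements inside $B$ and using $|A|=|B|$ gives $|A|-|S|<|\bigcap_{s\in S}M_s^c|$. Setting $W_S:=\bigl(\bigcap_{s\in S}M_s^c\bigr)\cup\{0\}$, exactly as before, one has $S+W_S\subseteq A$. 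Note that the hypothesis already forces $0\notin B$ (otherwise $A$, being nonempty, would contain the singleton coset of $\langle 0\rangle$), so $0$ is genuinely adjoined and $|W_S|=|\bigcap_{s\in S}M_s^c|+1$; this yields the strict inequality $|A|<|S|+|W_S|-1$.

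Next, since $G$ is abelian I would apply Kneser's theorem to the finite nonempty sets $S$ and $W_S$. Let $H:=\{g\in G: g+(S+W_S)=S+W_S\}$ be the stabilizer of $S+W_S$; it is finite, being contained in a translate of the finite set $S+W_S$. Kneser's theorem gives the equality $|S+W_S|=|S+H|+|W_S+H|-|H|$, where $S+H$ and $W_S+H$ denote the $H$-saturations. Combining $|A|\ge|S+W_S|$ with the inequality $|A|<|S|+|W_S|-1$ and the trivial bounds $|S|\le|S+H|$, $|W_S|\le|W_S+H|$, I get $(|S+H|-|S|)+(|W_S+H|-|W_S|)<|H|-1$. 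In particular $|H|>1$, so $H$ is a nontrivial finite subgroup, and moreover $|W_S+H|-|W_S|\le|H|-2$.

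The decisive step is to locate an element of $B$ inside $H$. Because $0\in W_S$, the subgroup $H$ itself is one of the $H$-cosets meeting $W_S$; writing $W_S+H$ as a disjoint union of $H$-cosets, the total deficiency $|W_S+H|-|W_S|$ is a sum of nonnegative contributions, one of which is $|H|-|W_S\cap H|$. The bound $|W_S+H|-|W_S|\le|H|-2$ then forces $|W_S\cap H|\ge 2$, so there exists a nonzero $b\in W_S\cap H$. Since $W_S\setminus\{0\}\subseteq B$, this $b$ lies in $B$, while $b\in H$ gives $\langle b\rangle\le H$. Finally, $0\in W_S$ makes $S+W_S$ contain $S+H$, so for any $s_0\in S$ the coset $s_0+\langle b\rangle\subseteq s_0+H\subseteq S+W_S\subseteq A$ is a coset of $\langle b\rangle$ contained in $A$ with $b\in B$, contradicting the hypothesis and completing the argument.

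I expect the main obstacle to be exactly this last extraction: the bare inequality of Olson's Theorem~\ref{th1.1} only guarantees that $A$ contains some coset of $H$, which is useless here unless that coset is a coset of $\langle b\rangle$ for an actual element $b\in B$. It is the exact count in Kneser's theorem, together with the fact that $0$ has been adjoined to $W_S$, that pins a nonzero element of $B$ inside the stabilizer $H$; I would take care to justify the finiteness and nontriviality of $H$ and the coset-deficiency bookkeeping, all of which are where the abelian hypothesis is genuinely used.
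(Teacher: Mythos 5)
Your proof is correct and follows essentially the same route as the paper's: Hall's marriage theorem to reduce to a counting inequality, Kneser's theorem applied to $S$ and $W_S=V_S\cup\{0\}$, and the observation that a coset of the Kneser subgroup $H$ contained in $A$ contains a coset of $\langle b\rangle$ for any $b\in B\cap H$. You merely run the argument by contradiction, extracting a nonzero $b\in W_S\cap H$ from the coset-deficiency count, where the paper argues directly that $B\cap H=\emptyset$ and verifies Hall's condition for every $S$; your explicit check that the hypothesis forces $0\notin B$ is a detail the paper leaves implicit.
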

\begin{proof}
For any subset $S$ of $A$, put $V_S := \{b \in B : S + b \subset A\}$ and $W_S := V_S \cup \{0\}$. By applying Kneser's Theorem (see \cite[Page 116, Theorem 4.3]{9}) to the subsets $S$ and $W_S$ in the group $G$, we know that there is a finite subgroup $H$ of $G$ such that:
\begin{equation}\label{eq4}
|S +W_S|\geq |S + H| + |W_S + H|-|H|,
\end{equation}
and
\begin{equation}\label{eq5}
S +W_S + H = S +W_S.
\end{equation}
Since $S +W_S$ is a subset of $A$, \eqref{eq5} implies that $A$ contains a coset of $H$. On the other hand, by our hypothesis, $A$ does not contain any coset of the subgroup generated by any element in $B$. Since a coset of $H$ contains a coset of the subgroup generated by b for any element $b$ in $H \cap B, H$ does not intersect with $B$, and thus $H\cap W_S =\{0\}$. Implementing this into \eqref{eq4}, we obtain that:
\begin{equation}\label{eq6}
|B| =|A|\geq |S +W_S|\geq |S + H| + |W_S|-1 \geq |S| + |V_S|.
\end{equation}
But then, since \eqref{eq6} is valid for any subset $S$ of $A$, Hall's marriage theorem implies that there is a matching from $A$ to $B$.
\end{proof}\\
Now, we want to outline some methods which can be combined with the above results and produce matchings between subsets of groups that are not necessarily torsion free or of prime
order. For this purpose, we will use the following generalization of the notion of matching.
\begin{definition}\label{de1.5}
Let $G$ be a group and $a = (a_1,\cdots , a_n)$ and $b = (b_1,\cdots , b_n)$ be two $n$-tuples
of elements of $G$ (note that repetitions are allowed in $n$-tuples). For a normal subgroup $N$
of $G$, a matching from $\mathfrak{a}$ to $\mathfrak{b}$ relative to $N$ is defined to be a permutation $\sigma \in S_n$ such that for any $1\leq i, j\leq n, a_ib_{\sigma (i)} \notin a_jN$. A matching relative to the trivial subgroup $N =\{e\}$ will be simply called a matching
\end{definition}
\begin{remark}\label{re1.6}
Note that if both $\mathfrak{a}$ and $\mathfrak{b}$ have $n$ distinct entries and we put $A =\{a_1,\cdots , a_n\}$ and $B = \{b_1,\cdots ,b_n\}$, then a matching from $\mathfrak{a}$ to $\mathfrak{b}$ relative to the trivial subgroup is nothing but a matching from $A$ to $B$ in the usual sense. Moreover, note that if $M$ and $N$ are two normal subgroups of $G$ with $M\subset N$, then any matching relative to $N$ is a priori a matching relative to $M$. In particular, any relative matching between two subsets gives also a usual matching between those subsets.
\end{remark}
The following remark, combined with Theorem \ref{th1.2}, provides us with a class of matchings
between arrays of elements of a group.
\begin{remark}\label{re1.7}
Let $G$ be a group and $\mathfrak{a}$ be an $n$-tuple of elements of $G$. Then the support of $\mathfrak{a}$, denoted by Supp($\mathfrak{a}$), is dened to be the subset of $G$ consisting of distinct entries of $\mathfrak{a}$. Now let $\mathfrak{a}$ and $\mathfrak{b}$ be two $n$-tuples of elements of $G$ such that there is a matching $f : \sup (\mathfrak{a}) \to \sup (\mathfrak{b})$ and that for every $a\in \sup (\mathfrak{a})$ the number of times that a appears as an entry of $\mathfrak{a}$ is the same as the number of times that $f(a)$ appears as an entry of $\mathfrak{b}$. Then $f$ can be lifted in an evident way to a matching $\tilde{f}:\mathfrak{a}\to \mathfrak{b}$.
\end{remark}
\begin{proposition}\label{pr1.8}
Let $\eta : G \to H$ be a group homomorphism and let  $\mathfrak{a}= (a_1,\cdots , a_n)$ and
$\mathfrak{b} = (b_1,\cdots , b_n)$ be two $n$-tuples of elements of $G$. Then there is a matching from $\eta (\mathfrak{a}) :=(\eta (a_1),\cdots , \eta (a_n))$ to $\eta (\mathfrak{b}) := (\eta (b_1),\cdots ,\eta (b_n))$ if and only if there is a matching from $\mathfrak{a}$ to $\mathfrak{b}$ relative to $\ker (\eta )$.
\end{proposition}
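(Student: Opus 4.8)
The plan is to show that a single permutation $\sigma \in S_n$ works simultaneously on both sides, so that the two existence statements are literally the same statement about $\sigma$. Concretely, I would fix an arbitrary $\sigma \in S_n$ and prove the equivalence: $\sigma$ is a matching from $\mathfrak{a}$ to $\mathfrak{b}$ relative to $\ker(\eta)$ if and only if $\sigma$ is a matching from $\eta(\mathfrak{a})$ to $\eta(\mathfrak{b})$. Since ``there is a matching'' means precisely ``there exists such a $\sigma$'', establishing this equivalence for every fixed $\sigma$ settles both directions of the proposition at once, with no need to treat the two implications separately.

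The heart of the argument is a pointwise translation of the defining conditions. For fixed indices $1 \le i, j \le n$ I would verify the equivalence
\[
a_i b_{\sigma(i)} \notin a_j \ker(\eta) \iff \eta(a_i)\eta(b_{\sigma(i)}) \neq \eta(a_j).
\]
Indeed, $a_i b_{\sigma(i)} \in a_j \ker(\eta)$ holds iff $a_j^{-1} a_i b_{\sigma(i)} \in \ker(\eta)$, iff $\eta(a_j^{-1} a_i b_{\sigma(i)}) = e$; and since $\eta$ is a homomorphism this last equality reads $\eta(a_j)^{-1}\eta(a_i)\eta(b_{\sigma(i)}) = e$, i.e. $\eta(a_i)\eta(b_{\sigma(i)}) = \eta(a_j)$. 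Negating yields the displayed equivalence. The right-hand side is exactly the failure of the usual matching condition for the images at the pair $(i,j)$, because a matching of $\eta(\mathfrak{a})$ to $\eta(\mathfrak{b})$ is a matching relative to the trivial subgroup $\{e\}$ of $H$ and so requires $\eta(a_i)\eta(b_{\sigma(i)}) \notin \eta(a_j)\{e\} = \{\eta(a_j)\}$.

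Running this equivalence over all pairs $(i,j)$ shows that $\sigma$ satisfies every instance of the relative matching condition if and only if it satisfies every instance of the ordinary matching condition for the images, which is the desired equivalence and hence the proposition. I do not anticipate a genuine obstacle: the proof is a direct unwinding of Definition \ref{de1.5} using the homomorphism property $\eta(a_i b_{\sigma(i)}) = \eta(a_i)\eta(b_{\sigma(i)})$ together with the characterization of membership in the left coset $a_j\ker(\eta)$. The only points demanding a little care are respecting the left-coset convention in the definition of relative matching and observing that normality of $\ker(\eta)$ is invoked solely to make ``matching relative to $\ker(\eta)$'' a legitimate instance of Definition \ref{de1.5}, rather than being used anywhere in the equivalence itself.
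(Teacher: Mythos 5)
Your proposal is correct and follows essentially the same route as the paper: both arguments fix a single permutation $\sigma$ and observe that, since $\eta$ is a homomorphism, $\eta(a_i)\eta(b_{\sigma(i)})=\eta(a_ib_{\sigma(i)})$ differs from $\eta(a_j)$ exactly when $a_ib_{\sigma(i)}\notin a_j\ker(\eta)$, so the same $\sigma$ witnesses both matchings. Your write-up merely spells out the coset-membership unwinding in a bit more detail than the paper does.
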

\begin{proof}
By definition, a matching from $\eta (\mathfrak{a})$ to $\eta (\mathfrak{b})$ is a permutation $\sigma \in S_n$ such that for any $1 \leq i, j\leq n, \eta (a_i)\eta (b_{\sigma (i)}) \neq \eta (a_j )$. But, since $\eta $ is a group homomorphism, $\eta (a_i)\eta (b_{\sigma (i)}) =\eta (a_ib_{\sigma (i)})$ is different from $\eta (a_j)$ if and only if $a_ib_{\sigma (i)} \notin a_j\ker (\eta )$. This implies that the same permutation that establishes a matching from $\eta (\mathfrak{a})$ to $\eta (\mathfrak{b})$ gives rise to a matching from $\mathfrak{a}$ to $\mathfrak{b} $  relative to $\ker (\eta )$, and vise versa
\end{proof}\\
\begin{example}\label{ex1.9}
As an application of the above proposition, let $1\to N \to G \stackrel{\eta }{\rightarrow }H \to 1$ be a short exact sequence of groups, i.e. $\eta  : G \to H$ is a group epimorphism with kernel $N$. Then, for any pair $\mathfrak{a}$ and $\mathfrak{b}$ of $n$-tuples of elements of $G$, a matching from $\eta (\mathfrak{a})$ to $\eta (\mathfrak{b})$ leads to a matching from $\mathfrak{a}$ to $\mathfrak{b}$ relative to $N$.\\
In particular, let $G =\prod _iG_i$  be a group and let $p_i : G \to G_i$ be the corresponding projection. For any ordered subset $A =\{a_1,\cdots , a_n\}$ of elements of $G$, let $A_i =(p_i(a_1),\cdots , p_i(a_n))$ be the $n$-tuple formed of the $i$-th components of elements of $A$ with respect to the given decomposition $G =\prod _iG_i$. Then, for any two subsets $A$ and $B$ in $G$, any matching from $A_i$ to $B_i$ in $G_i$ can be lifted to a matching from $A$ to $B$ (even to a matching relative to $\prod _{j\neq i} G_j$ ).\\
This, together with Theorem \ref{th1.2}, gives us a tool for constructing matchings between subsets of groups that are not necessarily torsion free or of prime order.
\end{example}
\section{Acyclic matching property for finite groups of prime order}\label{sec2}
It is shown in \cite{5} and \cite{8} (and it follows from Theorem \ref{th1.2}) that a group satisfies the matching property if and only if it is either torsion free or finite of prime order. But a similar classification for acyclic matching property is yet to be found. The fact that every abelian
torsion free group admits an order compatible with the group structure can be used to prove
that such groups satisfy the acyclic matching property (see \cite{8}). But characterizing nonabelian
torsion free groups and finite groups of prime order that satisfy the acyclic matching property remains an open problem. In this section, we give two infinite sequences of prime numbers p such that $\mathbb{Z}/p\mathbb{Z}$ does not satisfy the acyclic matching property. For this, we use
the following lemma.
\begin{lemma}\label{l2.1}
Let $G$ be an abelian group and $A$ be a finite subset of $G$ such that $|A|$ is odd and
$0\notin A$ (for abelian groups the neutral element is denoted by 0). Then every acyclic matching
$f$ from $A$ to itself has a fixed point, that is there exists an element $a \in A$ such that $f(a) = a$.
\end{lemma}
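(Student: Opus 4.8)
The plan is to exploit the symmetry of the matching condition in an abelian group by comparing $f$ with its inverse $f^{-1}$. First I would observe that since $f \colon A \to A$ is a bijection, so is $f^{-1} \colon A \to A$, and I would check that $f^{-1}$ is again a matching from $A$ to itself. Indeed, given $c \in A$, write $c = f(a)$ with $a = f^{-1}(c) \in A$; then $c + f^{-1}(c) = f(a) + a = a + f(a)$, where the last equality uses commutativity of $G$. Since $f$ is a matching, $a + f(a) \notin A$, so $c + f^{-1}(c) \notin A$, proving that $f^{-1}$ is a matching. This is the one and only place where the hypothesis that $G$ is abelian is used.

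Next I would compare the multiplicity functions. Reindexing the defining set via $a = f^{-1}(c)$ gives, for every $x \in G$,
\[
m_{f^{-1}}(x) = |\{c \in A : c + f^{-1}(c) = x\}| = |\{a \in A : a + f(a) = x\}| = m_f(x),
\]
so that $m_{f^{-1}} = m_f$. Now the acyclicity of $f$ enters: by definition, any matching $g$ from $A$ to itself with $m_g = m_f$ must coincide with $f$. Applying this with $g = f^{-1}$ yields $f^{-1} = f$; in other words, $f$ is an involution.

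Finally, I would conclude with a parity argument. An involution $f$ on the finite set $A$ decomposes $A$ into fixed points together with two-element orbits $\{a, f(a)\}$ where $f(a) \neq a$. If $f$ had no fixed point, then $A$ would be a disjoint union of such two-element orbits, forcing $|A|$ to be even and contradicting the assumption that $|A|$ is odd. Hence $f$ must fix at least one element of $A$, which is exactly the assertion of the lemma.

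I do not expect a genuine obstacle here: the proof is short once one thinks to test $f^{-1}$ against the acyclicity hypothesis. The only steps needing care are verifying that $f^{-1}$ is a matching (which is precisely where commutativity is indispensable, so the result should not be expected to survive in the nonabelian setting without modification) and the reindexing that establishes $m_{f^{-1}} = m_f$; the remaining parity observation is the standard fact that a fixed-point-free involution forces even cardinality.
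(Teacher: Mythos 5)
Your proof is correct and follows essentially the same route as the paper: both verify that $f^{-1}$ is a matching with $m_{f^{-1}} = m_f$ (using commutativity), invoke acyclicity to get $f = f^{-1}$, and finish with the parity argument on the involution. No gaps; your write-up is if anything slightly more detailed in the reindexing step.
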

\begin{proof}
First, note that for any matching $f : A \to A$, the inverse bijection $f^{-1} : A\to A$
is a matching with the same multiplicity function as $f$ (note that since $G$ is assumed to
be abelian, $af^{-1}(a) = f^{-1}(a)a = (f^{-1}(a))f(f^{-1}(a)) \notin A)$. Therefore, if $f : A\to A$ is an acyclic matching, we have $f = f^{-1}$ and thus $f\circ f = Id_A$. This implies that $f$, viewed as a permutation of elements of $A$, has order two and hence can be decomposed as product of disjoint 2-cycles and 1-cycles. But since we assumed that $A$ has odd cardinality, there is at least one 1-cycle in the cycle decomposition of $f$, which means that f has at least one fixed
point. 
\end{proof}\\
Now we are ready to prove the following two propositions, each of which provides us with
an infinite family of prime numbers $p$ such that $\mathbb{Z}/p\mathbb{Z}$ does not satisfy the acyclic matching property.
\begin{proposition}\label{pro2.2}
Let $p$ be a prime number such that $p\equiv -1$ modulo 8. Then $\mathbb{Z}/p\mathbb{Z}$ does not satisfy the acyclic matching property
\end{proposition}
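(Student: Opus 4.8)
The plan is to disprove the acyclic matching property for $\mathbb{Z}/p\mathbb{Z}$ by exhibiting a single obstructing pair, namely $B = A$, for a suitably chosen subset $A$. The engine is Lemma \ref{l2.1}: if I can produce a set $A$ with $|A|$ odd and $0 \notin A$ for which \emph{no} acyclic matching $f \colon A \to A$ can exist, then the property fails. To force nonexistence I want the fixed point guaranteed by Lemma \ref{l2.1} to be self-contradictory. Recall that a matching $f \colon A \to A$ must satisfy $a + f(a) \notin A$ for every $a$; in particular a fixed point $f(a) = a$ forces $2a \notin A$. Thus if $A$ is chosen so that $2a \in A$ for all $a \in A$, i.e.\ $2A = A$, then no acyclic self-matching of $A$ can have a fixed point, contradicting Lemma \ref{l2.1} as soon as $|A|$ is odd and $0 \notin A$.

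Next I would produce such an $A$. The natural candidate is the cyclic subgroup $A := \langle 2 \rangle$ of the multiplicative group $(\mathbb{Z}/p\mathbb{Z})^{\times}$ generated by $2$. By construction $0 \notin A$, and $2A = A$ since multiplication by $2$ merely permutes the powers of $2$. It therefore only remains to check that $|A| = \mathrm{ord}_p(2)$, the multiplicative order of $2$ modulo $p$, is odd; this is exactly where the hypothesis $p \equiv -1 \pmod 8$ enters.

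For the arithmetic step I would argue as follows. Writing $p - 1 = 2q$, the congruence $p \equiv -1 \pmod 8$ gives $p - 1 \equiv 6 \pmod 8$, so $q = (p-1)/2$ is odd. Moreover $p \equiv \pm 1 \pmod 8$ is precisely the condition for $2$ to be a quadratic residue modulo $p$ (the second supplement to quadratic reciprocity), so $2$ lies in the unique index-two subgroup of squares of the cyclic group $(\mathbb{Z}/p\mathbb{Z})^{\times}$, a subgroup of odd order $q$. Hence $\mathrm{ord}_p(2)$ divides the odd number $q$ and is itself odd, so $|A|$ is odd as required.

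With $A = \langle 2 \rangle$ in hand the contradiction closes: assuming $\mathbb{Z}/p\mathbb{Z}$ had the acyclic matching property, applying it to the pair $(A, A)$ (legitimate since the two sets have equal cardinality and $0 \notin A$) would produce an acyclic matching $f \colon A \to A$, which by Lemma \ref{l2.1} has a fixed point $a$ with $2a \notin A$, contradicting $2A = A$. I expect the only delicate point to be the arithmetic claim that $\mathrm{ord}_p(2)$ is odd; everything else is a direct assembly of the doubling-closure trick with Lemma \ref{l2.1}.
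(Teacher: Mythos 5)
Your proof is correct, and it follows the same overall template as the paper's --- invoke Lemma \ref{l2.1} on a set $A$ with $0\notin A$ and $|A|$ odd that is closed under multiplication by $2$, so that the forced fixed point $a$ (which would require $2a\notin A$) yields a contradiction --- but with a different witness set. The paper takes $A$ to be the set of all nonzero quadratic residues modulo $p$, of cardinality $\frac{p-1}{2}$, which is odd precisely because $p\equiv -1 \pmod 8$; closure under doubling is then immediate from the second supplement to quadratic reciprocity ($2$ is a square, and a square times a square is a square). You instead take $A=\langle 2\rangle$, the multiplicative subgroup generated by $2$, for which closure under doubling is trivial, and you push the quadratic reciprocity input into showing that $|A|=\mathrm{ord}_p(2)$ is odd (it divides the odd number $\frac{p-1}{2}$ since $2$ is a residue). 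Your arithmetic there is right. What your version buys is the observation, not made explicit in the paper, that Proposition \ref{pro2.2} is actually subsumed by Proposition \ref{pro2.3}: your argument literally proves that $p\equiv -1\pmod 8$ forces the order of $2$ modulo $p$ to be odd, and then runs the proof of Proposition \ref{pro2.3} verbatim on the set of powers of $2$. So the first ``infinite family'' of primes is contained in the second. The paper's choice is marginally more self-contained, since verifying $2A=A$ for the residues requires no statement about orders; yours is marginally more economical, since it exposes the logical dependence between the two propositions.
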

\begin{proof}
Let $(\mathbb{Z}/p\mathbb{Z})^*$ denote the set of nonzero elements of $\mathbb{Z}/p\mathbb{Z}$ and consider the subset $A =\{n^2 : n \in  (\mathbb{Z}/p\mathbb{Z})^*\}$ of nonzero squares modulo $p$. We claim that there is no acyclic matching from $A$ to $A$. First, note that $|A| =\dfrac{p-1}{2}$ , which is an odd number since $p\equiv -1$ modulo 8. Therefore, Lemma \ref{l2.1} implies that any acyclic matching $f$ from $A$ to $A$ has to have a fixed point. But if $f(n^2) = n^2$ for some $n \in (\mathbb{Z}/p\mathbb{Z})^*$, by definition of matching, we should have $2n^2 \notin A$. This would imply that 2 is not a square modulo $p$, which contradicts our assumption $p\equiv -1$ modulo 8.
\end{proof}\\
\begin{proposition}\label{pro2.3}
Let $p$ be a prime number such that the order of 2 modulo $p$ is an odd number. Then $\mathbb{Z}/p\mathbb{Z}$ does not satisfy the acyclic matching property.
\end{proposition}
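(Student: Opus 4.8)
The plan is to mimic the structure of the proof of Proposition \ref{pro2.2}, but to replace the set of quadratic residues by the cyclic subgroup generated by $2$. Write $d := \mathrm{ord}_p(2)$ for the multiplicative order of $2$ modulo $p$, which is odd by hypothesis (note that $d$ being defined forces $p$ to be an odd prime, so that $2$ is a unit in $\mathbb{Z}/p\mathbb{Z}$ and $\langle 2\rangle$ makes sense; the degenerate case $d=1$ cannot occur since $2\equiv 1$ is impossible for a prime $p$). I would take $A = B = \langle 2\rangle = \{1,2,4,\ldots,2^{d-1}\}$, the cyclic subgroup of $(\mathbb{Z}/p\mathbb{Z})^*$ generated by $2$. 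This set satisfies exactly the hypotheses needed to invoke Lemma \ref{l2.1}: its cardinality is $d$, which is odd, and it avoids $0$. Since $|A|=|B|$ and $0\notin B$ hold trivially, to prove that $\mathbb{Z}/p\mathbb{Z}$ fails the acyclic matching property it suffices to show that there is no acyclic matching from $A$ to $A$.

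First I would argue by contradiction: suppose $f:A\to A$ is an acyclic matching. Because $|A|=d$ is odd and $0\notin A$, Lemma \ref{l2.1} forces $f$ to have a fixed point, i.e.\ there exists $a\in A$ with $f(a)=a$.

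Next I would exploit the multiplicative closure of $A$. The defining condition of a matching, written additively in $\mathbb{Z}/p\mathbb{Z}$, gives $a+f(a)=2a\notin A$ at this fixed point. The key observation is that $2a$ is the \emph{ring} product of $2\in A$ and $a\in A$; since $A$ is a multiplicative subgroup it is closed under multiplication, so $2a\in A$. This contradicts $2a\notin A$, and the contradiction shows that no acyclic matching from $A$ to $A$ can exist, whence $\mathbb{Z}/p\mathbb{Z}$ fails the acyclic matching property.

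The argument is short and I do not anticipate a serious obstacle. The only point that requires care is the interplay between the additive group structure, in which matchings are defined through $a+f(a)$, and the multiplicative structure used to construct $A$: one must observe that doubling an element of $\mathbb{Z}/p\mathbb{Z}$ coincides with multiplying it by the ring element $2$, so that closure of $\langle 2\rangle$ under multiplication is precisely what rules out any fixed point. This is the analogue of the step in Proposition \ref{pro2.2} where the fact that $2$ is a quadratic residue was used; here the role of ``$2$ is a square'' is played by the stronger statement ``$A$ is stable under multiplication by $2$''.
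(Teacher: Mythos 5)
Your proposal is correct and is essentially identical to the paper's proof: the paper also takes $A=\{2^i: 0\leq i\leq m-1\}$ (which is exactly the subgroup $\langle 2\rangle$), applies Lemma \ref{l2.1} to get a fixed point, and derives the contradiction $2^i+2^i=2^{i+1}\in A$. Your remark about why closure under multiplication by $2$ replaces the quadratic-residue argument of Proposition \ref{pro2.2} is a fair gloss but not a new idea.
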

\begin{proof}
Let $p$ be a prime number such that the order $m$ of 2 modulo $p$ is an odd number and
consider the subset $A =\{2^i : 0\leq i \leq m -1\}$ of all powers of 2 modulo $p$. We claim that
there are no acyclic matchings from $A$ to $A$. By Lemma \ref{l2.1}, any acyclic matching $f : A \to  A$ has a fixed point. But if $f(2^i) = 2^i$, then $2^i +2^i = 2^{i+1} \in A$, which contradicts the definition of a matching.
\end{proof}\\
For a small prime $p$, one can check directly whether or not $\mathbb{Z}/p\mathbb{Z}$ satisfies the acyclic matching property. But it would be nice if one could answer the following:
\begin{question}\label{qe2.4}
Are there infinitely many primes p such that $\mathbb{Z}/p\mathbb{Z}$ satisfies the acyclic match-ing property?
\end{question}
\section{Linear Matchings in Central Extensions}\label{sec3}
In this section we formulate and prove the linear analogue of Theorem \ref{th1.2} proven in Section 1. Throughout this section we assume that $K\subset L$ is a central extension of division rings, that is $L$ is a division ring and $K$ is a subfield of the center of $L$. For any subset $S$ of $L$, the $K$-subspace of $L$ generated by $S$ will be denoted by $\left\langle S\right\rangle $. Furthermore, for any pair of subsets $A$ and $B$ of $L$, the Minkowski product $AB$ of these subsets is defined as:
\[
AB := \{ab \mid a \in A, b \in B\}
\]
Recall that Eliahou and Lecouvey have introduced the following notions for a central extension 
$K\subset L$ of division rings (see \cite{6}). Let $A$ and $B$ be $n$-dimensional $K$-subspaces of $L$ for some $n\geq 1$. Then an ordered basis $\mathcal{A} =\{a_1,\cdots , a_n\}$ of $A$ is said to be matched to an ordered basis $\mathcal{B} =\{b_1,\cdots , b_n\}$ of $B$ if
\begin{equation}\label{eq7}
\forall 1\leq i\leq n, \; a_i^{-1}A\cap B\subset \left\langle b_1,\cdots , \hat{b}_i,\cdots ,b_n\right\rangle ,
\end{equation}
where $\left\langle b_1,\cdots , \hat{b}_i,\cdots ,b_n\right\rangle $ is the vector space generated by $\{b_1,\cdots , b_n\} \setminus \{b_i\}$.The subspace $A$ is matched to the subspace $B$ if every basis of $A$ can be matched to a basis of $B$. Finally, the extension $L$ of $K$ has the linear matching property if for every $n\geq 1$ and any pair $A$ and $B$ of $n$-dimensional $K$-subspaces of $L$ with $1\notin B, A$ is matched to $B$ (it is shown in \cite[Lemma 2.3]{6} that $1\notin B$ is a necessary condition for the existence of a matching). One of the main results in \cite{6} is that a central extension $K\subset L$ has the linear matching property if and only if there are no nontrivial finite intermediate extensions $K\subset M \subset L$. We would like to mention that, although the statement of \cite[Theorem 2.6]{6} is slightly different and assumes that the extension is either purely transcendental or finite of prime degree, what they actually use in their proof is that there are no nontrivial finite intermediate extensions, which is a weaker condition (see also \cite{1}). In the following, as an analogy with Theorem \ref{th1.2}, we give a generalization of this result (see Theorem \ref{th3.3}). The main ingredient in our proof is the following linear version of Olson's theorem.
\begin{theorem}\label{th3.1}
\cite[Theorem 4.3]{6} Let $K\subset L$ be a central extension of division rings and let $A$
and $B$ be two nonzero finite dimensional $K$-subspaces of $L$. Then there exists a nonzero $K$-subspace $S$ of $\left\langle AB \right\rangle $ and a finite dimensional sub-division ring $M$ of $L$ such that the following hold:
\begin{itemize}
\item[(1)]
$K \subset M \subset L$,
\item[(2)]
$\dim (S) \geq \dim (A) +\dim (B) -\dim (M)$,
\item[(3)]
$MS = S$ or $SM = S$.
\end{itemize}
We will also use the following definition.
\end{theorem}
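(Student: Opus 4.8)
The plan is to read this off as the exact linear transcription of Olson's theorem (Theorem~\ref{th1.1}), with cardinalities of subsets replaced by $K$-dimensions of subspaces and finite subgroups replaced by finite-dimensional sub-division rings. First I would normalize. Fixing nonzero $a_0\in A$ and $b_0\in B$ and passing from $(A,B)$ to $(a_0^{-1}A,\;Bb_0^{-1})$ changes neither $\dim A$ nor $\dim B$, and it changes $\langle AB\rangle$ only by the $K$-linear bijection $x\mapsto a_0^{-1}xb_0^{-1}$. Since left and right multiplication by units of $L$ are $K$-linear isomorphisms that carry a one-sided $M$-module to a one-sided module over a unit-conjugate of $M$ of the same dimension, any solution for the normalized pair transports back to a solution for $(A,B)$. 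Hence I may assume $1\in A$ and $1\in B$, so that $A,B\subseteq\langle AB\rangle$.

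Next, for a nonzero finite-dimensional $K$-subspace $T\subseteq L$ I would introduce the one-sided stabilizers $M_\ell(T)=\{x\in L:xT\subseteq T\}$ and $M_r(T)=\{x\in L:Tx\subseteq T\}$. Each is a sub-division ring of $L$ containing $K$, and each is finite-dimensional over $K$: the map $M_\ell(T)\hookrightarrow\operatorname{End}_K(T)$ sending $x$ to left multiplication is injective, since $xT=0$ forces $x=0$ in a division ring, so $\dim_K M_\ell(T)\le(\dim_K T)^2$, and likewise for $M_r(T)$. After normalization $1\in T$, so $M_\ell(T)=M_\ell(T)\cdot1\subseteq T$, which will let me compare $\dim M$ with $\dim S$. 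The argument is then an induction on $\dim B$. The base case $\dim B=1$ is immediate: take $M=K$ and $S=\langle AB\rangle$, which has dimension $\dim A=\dim A+\dim B-\dim K$ and satisfies $KS=S$. For the inductive step, either a linear Dyson-type transform applies, replacing $(A,B)$ by $(A',B')$ with $A'B'\subseteq\langle AB\rangle$, $\dim A'+\dim B'\ge\dim A+\dim B$, and $\dim B'<\dim B$, so that the inductive hypothesis applied to $(A',B')$ produces $S\subseteq\langle A'B'\rangle\subseteq\langle AB\rangle$ and $M$ with the desired bound; or the transform is obstructed, and the obstruction is precisely the presence of a nontrivial sub-division ring $M$ stabilizing some nonzero $S\subseteq\langle AB\rangle$ on one side, for which conditions (2) and (3) then hold directly. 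Undoing the normalization recovers $S$ and $M$ for the original $A,B$.

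The main obstacle is the noncommutativity of $L$. When $L$ is commutative the statement is essentially the linear analogue of Kneser's theorem: one takes $S=\langle AB\rangle$ and $M$ its (two-sided) stabilizer, and the only real point is that $M$ is finite-dimensional, which the normalization $1\in\langle AB\rangle$ guarantees via $M\subseteq\langle AB\rangle$. Over a genuine division ring, however, the left and right stabilizers differ, so the transform in the inductive step must be engineered to preserve a \emph{single} side's module structure throughout, and one must keep track of which of $MS=S$ or $SM=S$ survives. The delicate quantitative point---exactly as with $|H|$ in Olson's theorem---is to control the stabilizer so that the deficit $\dim A+\dim B-\dim S$ is absorbed \emph{precisely} by $\dim M$; achieving this will likely require an isoperimetric or critical-pair analysis rather than a naive transform, and this is where I expect the real work to lie.
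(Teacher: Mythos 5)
First, a point of comparison: the paper does not prove this statement at all --- it is quoted verbatim from Eliahou and Lecouvey \cite[Theorem 4.3]{6} and used as a black box, so there is no internal argument to measure your attempt against. What you have written is therefore an attempt to reprove a cited result, and it has to stand on its own.

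On its own terms, your sketch sets up a reasonable framework but leaves the actual theorem unproved. The normalization to $1\in A$ and $1\in B$ (with the transport of $(S,M)$ back through $x\mapsto a_0^{-1}xb_0^{-1}$, using that $K$ is central so conjugates of $M$ still contain $K$), the observation that the one-sided stabilizers $M_\ell(T)$ and $M_r(T)$ are sub-division rings containing $K$ that embed $K$-linearly into $\operatorname{End}_K(T)$ and hence are finite dimensional, and the base case $\dim B=1$ are all correct and are indeed the standard opening moves. The gap is the entire inductive step: you assert that either a linear Dyson-type transform applies, or ``the transform is obstructed, and the obstruction is precisely the presence of a nontrivial sub-division ring $M$ stabilizing some nonzero $S$ \dots\ for which conditions (2) and (3) then hold directly.'' That dichotomy \emph{is} the theorem. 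In Olson's group-theoretic original, the corresponding step is where all the work lives: one must run a minimality argument over iterated transforms and show that the stabilizer of the resulting set absorbs the deficit $|X|+|Y|-|T|$ \emph{exactly}, and the linear version in \cite{6} has to redo this with subspaces while keeping the module structure on a single, consistent side (since $M_\ell$ and $M_r$ need not agree). You define no transform, state no minimality criterion, and give no argument that the obstruction produces an $M$ meeting the quantitative bound in (2); you concede as much when you write that ``this is where I expect the real work to lie.'' As it stands, the proposal is a plausible plan for reproving \cite[Theorem 4.3]{6}, not a proof of it.
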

\begin{definition}\label{df3.2}
Let $K \subset L$ be a central extension of division rings and $M$ be a sub-division ring in $L$. Then a left (resp., right) linear translate of $M$ is a $K$-subspace of the form $lM$ (resp., $Ml$) for a nonzero element $l\in L$.
\end{definition}
Now we are ready to prove the following linear version of Theorem \ref{th1.2}.
\begin{theorem}\label{th3.3}
 Let $K\subset L$ be a central extension of division rings and let $A$ be an $n$-dimensional $K$-subspace of $L$ which does not contain any (left or right) linear translate of a nontrivial finite dimensional sub-division ring of $L$. Then $A$ is matched to any $n$-dimensional $K$-subspace $B$ of $L$ provided $1\notin B$.
 \end{theorem}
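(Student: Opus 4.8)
The plan is to mirror the proof of Theorem~\ref{th1.2} closely, translating cardinalities into $K$-dimensions, the Minkowski product in $G$ into the Minkowski product in $L$, cosets of finite subgroups into linear translates of finite-dimensional sub-division rings, and Olson's Theorem~\ref{th1.1} into its linear counterpart Theorem~\ref{th3.1}. The single genuinely new ingredient, which I would isolate first, is a linear replacement for Hall's marriage theorem. For an ordered basis $\mathcal{A}=\{a_1,\dots,a_n\}$ of $A$ set $U_i:=a_i^{-1}A\cap B$ for $1\le i\le n$; since $K$ is central, each $a_i^{-1}A$, and hence each $U_i$, is a $K$-subspace of $B$. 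By the definition in \eqref{eq7}, $\mathcal{A}$ is matched to an ordered basis $\{b_1,\dots,b_n\}$ of $B$ precisely when $U_i\subset\langle b_j:j\ne i\rangle$ for every $i$. Passing to the dual space $B^{*}$, this is the demand for a basis $\{\phi_1,\dots,\phi_n\}$ of $B^{*}$ with each $\phi_i$ lying in the annihilator $U_i^{\perp}$; by Rado's theorem on independent transversals in a representable matroid, such a choice exists if and only if $\dim\langle U_i^{\perp}:i\in I\rangle\ge|I|$ for all $I\subset\{1,\dots,n\}$. Since $\langle U_i^{\perp}:i\in I\rangle=\bigl(\bigcap_{i\in I}U_i\bigr)^{\perp}$, this reads: the basis $\mathcal{A}$ can be matched to some basis of $B$ if and only if $\dim\bigcap_{i\in I}U_i\le n-|I|$ for every $I$.

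Now I would argue by contradiction. If $A$ is not matched to $B$, some basis $\mathcal{A}$ violates the criterion, so there is a nonempty $I$ with $\dim\bigcap_{i\in I}U_i\ge n-|I|+1$. Put $S:=\langle a_i:i\in I\rangle$, so $\dim S=|I|$ and $S\subset A$; put $W:=\bigcap_{i\in I}U_i$; and put $W':=W+K\cdot 1$. Because $W\subset B$ and $1\notin B$, we get $\dim W'=\dim W+1\ge n-|I|+2$. The centrality of $K$ gives $SW'\subset A$: writing $s=\sum_{i\in I}c_ia_i$ with $c_i\in K$ and $w'=w+c$ with $w\in W$, $c\in K$, each $a_iw\in A$ forces $sw=\sum_i c_i\,a_iw\in A$, while $sc=cs\in A$, so $sw'=sw+sc\in A$.

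Applying Theorem~\ref{th3.1} to the nonzero finite-dimensional subspaces $S$ and $W'$ produces a nonzero $K$-subspace $T\subset\langle SW'\rangle$ and a finite-dimensional sub-division ring $M$ with $K\subset M\subset L$, $\dim T\ge\dim S+\dim W'-\dim M$, and either $MT=T$ or $TM=T$. Since $\langle SW'\rangle\subset A$ we have $\dim T\le n$, whereas the dimension bound gives $\dim T\ge|I|+(n-|I|+2)-\dim M=n+2-\dim M$; hence $\dim M\ge 2$, so $M$ is nontrivial. Choosing $0\ne t\in T\subset A$, the relation $MT=T$ (resp. $TM=T$) yields $Mt\subset A$ (resp. $tM\subset A$), that is, $A$ contains a right (resp. left) linear translate of $M$. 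If $M\ne L$, this directly contradicts the hypothesis that $A$ contains no linear translate of a nontrivial finite-dimensional sub-division ring. If instead $M=L$, then $Lt=L\subset A$ forces $A=L$, whence $\dim_K L=n=\dim_K B$ and so $B=L\ni 1$, contradicting $1\notin B$. Either way we reach a contradiction, and therefore $A$ is matched to $B$.

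The step I expect to be the crux is the linear Hall criterion of the first paragraph: one must verify that matchability of a fixed basis is governed by exactly the intersection-dimension inequalities above, so that its failure hands us a single pair of subspaces $S,W'$ with a controlled dimension deficit to feed into Theorem~\ref{th3.1}. After that the bookkeeping is routine, the only delicate points being the consistent tracking of left versus right translates coming from the two alternatives $MT=T$ and $TM=T$, the $+1$ gained by adjoining $1\notin B$ to $W$, and the separate treatment of the degenerate case $M=L$ (the analogue of $H=G$ in Theorem~\ref{th1.2}).
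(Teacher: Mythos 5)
Your proof is correct and follows essentially the same route as the paper: the paper also fixes a basis, invokes the linear Hall criterion (which it cites from \cite[Proposition 3.1]{6} rather than re-deriving via duality and Rado's theorem as you do) to obtain a subset $I$ with $\dim V_I > n-|I|$, forms $A_I=\langle a_i\rangle_{i\in I}$ and $W_I=V_I\oplus K$, and applies Theorem \ref{th3.1} to reach the same contradiction. The only cosmetic differences are that the paper first uses the translate condition to force $M=K$ and then contradicts the dimension bound, whereas you first use the dimension bound to force $\dim M\ge 2$ and then contradict the translate hypothesis (and you additionally treat $M=L$ separately, which is if anything slightly more careful).
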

 \begin{remark}\label{3.4}
  Note that the above theorem generalizes sufficiency of the condition in \cite[Theorem 2.6]{6} and also it's refinement \cite[Theorem 5.3]{6}. 
  \end{remark}
\begin{proof}
(of Theorem \ref{th3.3}) Fix an ordered basis $\mathcal{A} = \{a_1,\cdots ,a_n\}$ for $A$ and assume by the way of contradiction that $\mathcal{A}$ can not be matched to any basis of $B$. For any subset $I$ in $\{1,\cdots , n\}$, put
\[
V_I :=\bigcap _{i\in I}(a^{-1}_i A \cap B) = \{x \in B : a_ix \in A \mathrm{for \; all}\; i\in I\}.
\]
Then, by the linear version of Hall's marriage theorem (see \cite[Proposition 3.1]{6} for example), there is a subset $I\subset \{1,\cdots , n\}$ such that:
\begin{equation}\label{eq8}
\dim (V_I ) > n - |I|.
\end{equation}
Note that $K\cap V_I =\{0\}$, as $V_I \subset B$ and $1\notin B$, and put $W_I := V_I \bigoplus K$. Then $A_IW_I\subset A$, where $A_I =\left\langle \{a_i\}_{i\in I}\right\rangle $. Applying Theorem \ref{th3.1} to the subspaces $A_I$ and $W_I$ , we conclude that there exists a nonzero $K$-subspace $S$ of $\left\langle A_IW_I\right\rangle $ and a finite dimensional sub-division ring $M$ of $L$ such that:
\[
\dim (S)\geq \dim (A_I ) +\dim (W_I ) -\dim (M)
\]
and that either $MS = S$ or $SM = S$. This would mean that $S$, and therefore $A$, contains a
linear translate of $M$. Hence, by our assumption on $A, M$ has to be the trivial sub-division
ring in $L$, i.e. $M = K$, and thus
\[
\dim (S)\geq \dim (A_I ) +\dim (W_I ) - 1 =|I| +\dim (V_I ) > n,
\]
where the last inequality follows from \eqref{eq8}. But this contradicts the fact that $S \subset \left\langle A_IW_I\right\rangle \subset A$ and $\dim(A) = n$.
\end{proof}
\section{ Linear Acyclic Matchings}\label{sec4}
In this section we introduce the linear version of the notion of an acyclic matching and  prove that every purely transcendental field extension satisfies the linear acyclic matching property. Recall that a matching $f : A\to B$ between two finite subsets of a group is called acyclic if for every matching $g : A \to B$ with $m_f = m_g$, one has $f = g$. A group $G$ satisfies the acyclic matching property if for every pair $A$ and $B$ of finite subsets of $G$ with $|A| = |B|$
and $e \notin B$ there is at least one acyclic matching from $A$ to $B$. The main result concerning the acyclic matching property is proven by Losonczy in \cite{8}, where it is shown that any torsion free abelian group has the acyclic matching property. In this section we prove a linear version of this result (see Theorem \ref{th4.5} below).\\
Let $K\subset L$ be a central extension of division rings and let $A$ and $B$ be $n$-dimensional
$K$-subspaces of $L$ for some $n\geq 1$. Then, following Eliahou and Lecouvey in \cite{6}, we say that a linear isomorphism $f : A \to B$ is a strong matching from $A$ to $B$ if every ordered basis $\mathcal{A}$ of $A$ is matched to the basis $\mathcal{B} := f(\mathcal{A})$ of $B$, under the bijection induced by $f$. We will need the following criterion for the existence of a strong matching.
\begin{theorem}\label{th4.1}
\cite[Theorem 6.3]{6} Using the above notations, there is a strong matching from $A$ to $B$ if and only if $AB \cap A = \{0\}$. Moreover, in this case, any linear isomorphism $f : A\to B$
is a strong matching.
\end{theorem}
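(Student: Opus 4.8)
The plan is to prove the two implications separately, deriving the ``moreover'' clause for free from the ``if'' direction and reserving a short basis construction for the harder ``only if'' direction. Throughout I would unwind the definition \eqref{eq7}: for an ordered basis $\mathcal{A} = \{a_1, \ldots, a_n\}$ of $A$ and its image $\mathcal{B} = f(\mathcal{A}) = \{b_1, \ldots, b_n\}$ with $b_i = f(a_i)$, being matched means $a_i^{-1} A \cap B \subset \langle b_1, \ldots, \hat{b}_i, \ldots, b_n\rangle$ for every $i$.

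First I would dispatch the ``if'' direction together with the ``moreover'' clause, which turn out to be the same statement. Assume $AB \cap A = \{0\}$ and let $f : A \to B$ be an arbitrary linear isomorphism. The key observation is that for every nonzero $a \in A$ the set $a^{-1}A \cap B$ is already trivial: if $x \in a^{-1}A \cap B$ then $ax \in A$, while $ax \in AB$ by definition of the Minkowski product, so $ax \in AB \cap A = \{0\}$; since $a \neq 0$ in a division ring, this forces $x = 0$. Consequently, for every ordered basis $\mathcal{A}$ of $A$ and every index $i$ we have $a_i^{-1}A \cap B = \{0\}$, which is trivially contained in $\langle b_1, \ldots, \hat{b}_i, \ldots, b_n\rangle$. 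Hence every basis of $A$ is matched to its image, so $f$ is a strong matching; as $f$ was arbitrary, this yields existence and the ``moreover'' claim at once.

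For the ``only if'' direction I would argue by contraposition, producing a single ordered basis of $A$ whose image under a given $f$ is not matched. Since $0 \in AB \cap A$ always, the assumption $AB \cap A \neq \{0\}$ furnishes nonzero $a \in A$ and $b \in B$ with $ab \in A$ and $ab \neq 0$; in particular $a, b \neq 0$. Setting $a_1 := a$, the relation $a_1 b = ab \in A$ shows $b \in a_1^{-1}A \cap B$. The matching condition at $i = 1$ would therefore demand $b \in \langle b_2, \ldots, b_n\rangle = f(\langle a_2, \ldots, a_n\rangle)$; to violate it, it suffices to complete $a_1 = a$ to an ordered basis $\{a_1, \ldots, a_n\}$ of $A$ for which $f^{-1}(b) \notin \langle a_2, \ldots, a_n\rangle$, since this is equivalent to $b \notin \langle b_2, \ldots, b_n\rangle$.

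The main obstacle is precisely this basis construction, which I would handle in two cases according to the position of the nonzero vector $f^{-1}(b)$ (nonzero because $f$ is an isomorphism and $b \neq 0$). If $f^{-1}(b) \in \langle a\rangle$, then any completion of $a$ to a basis works, because $\langle a_2, \ldots, a_n\rangle$ meets $\langle a\rangle$ only in $0$ whereas $f^{-1}(b) \neq 0$. If $f^{-1}(b) \notin \langle a\rangle$, then $\{a, f^{-1}(b)\}$ is independent; extending it to a basis $\{a, f^{-1}(b), u_3, \ldots, u_n\}$ of $A$ and setting $a_1 = a$, $a_2 = a + f^{-1}(b)$, and $a_i = u_i$ for $i \geq 3$ again gives a basis of $A$, and a direct coefficient comparison in the original basis shows $f^{-1}(b) \notin \langle a_2, \ldots, a_n\rangle$. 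In either case the matching condition fails at $i = 1$, so $f$ is not a strong matching, completing the contrapositive and hence the theorem.
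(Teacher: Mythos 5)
The paper itself gives no proof of this statement: Theorem \ref{th4.1} is quoted verbatim from Eliahou--Lecouvey \cite[Theorem 6.3]{6}, so there is no in-paper argument to compare against. Judged on its own, your proof is correct and complete. The ``if'' direction rests on the right observation, namely that $AB\cap A=\{0\}$ forces $a^{-1}A\cap B=\{0\}$ for every nonzero $a\in A$ (since $x\in a^{-1}A\cap B$ gives $ax\in AB\cap A$, and a division ring has no zero divisors), after which every instance of condition \eqref{eq7} holds vacuously for any linear isomorphism; this correctly yields existence and the ``moreover'' clause simultaneously. The contrapositive of the ``only if'' direction is also handled properly: given $ab\in (AB\cap A)\setminus\{0\}$ you must, for an arbitrary isomorphism $f$, exhibit one ordered basis starting with $a_1=a$ whose span of the remaining vectors avoids $f^{-1}(b)$, and your two-case construction (according to whether $f^{-1}(b)$ lies in $\langle a\rangle$ or not, using $a_2=a+f^{-1}(b)$ in the second case) does exactly that, including the degenerate case $n=1$ where the target span is $\{0\}$. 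This is essentially the standard argument for this equivalence, and the only point worth flagging is cosmetic: in the second case the verification that $f^{-1}(b)\notin\langle a+f^{-1}(b),u_3,\dots,u_n\rangle$ deserves the one-line coefficient computation you allude to, since that is where the independence of $\{a,f^{-1}(b)\}$ is actually used.
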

Now, we want to introduce the acyclicity property for strong matchings. In order to motivate our definition, we make the following observation.
\begin{proposition}\label{pr4.2}
 Let $G$ be a group, $A$ and $B$ be two finite subsets of $G$ with the same cardinality, and $f, g : A \to B$ be two matchings from $A$ to $B$. Then $m_f = m_g$ if and only if there exists a bijection $\phi : A\to A$ such that for all a in $A, af(a) =\phi (a)g(\phi (a))$.
 \end{proposition}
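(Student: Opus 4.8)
The plan is to reformulate both sides of the equivalence in terms of the fibers of the two ``product maps'' $a \mapsto af(a)$ and $a \mapsto ag(a)$, and then to build the desired bijection fiber by fiber. For each $x \in G$, set $A_f(x) := \{a \in A : af(a) = x\}$ and $A_g(x) := \{a \in A : ag(a) = x\}$. By the definition of the multiplicity functions we have $m_f(x) = |A_f(x)|$ and $m_g(x) = |A_g(x)|$, so the hypothesis $m_f = m_g$ says exactly that $|A_f(x)| = |A_g(x)|$ for every $x \in G$. Note also that, as $x$ ranges over $G$, the nonempty sets $A_f(x)$ form a partition of $A$ (each $a$ lies in the unique fiber indexed by $x = af(a)$), and likewise the nonempty sets $A_g(x)$ partition $A$.

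For the direction ($\Leftarrow$), suppose such a $\phi$ exists. Since $\phi$ is a bijection, as $a$ runs over $A$ the element $\phi(a)$ also runs over all of $A$; substituting $a' = \phi(a)$ in the relation $af(a) = \phi(a)\,g(\phi(a))$ identifies the multiset $\{af(a) : a \in A\}$ with the multiset $\{a'g(a') : a' \in A\}$. Counting how many times a fixed value $x$ occurs in each multiset yields $m_f(x) = m_g(x)$ for all $x$.

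For the direction ($\Rightarrow$), assume $m_f = m_g$, so $|A_f(x)| = |A_g(x)|$ for all $x$; in particular $A_f(x) \neq \emptyset$ precisely when $A_g(x) \neq \emptyset$. For each such $x$ I would choose an arbitrary bijection $\phi_x : A_f(x) \to A_g(x)$, which exists because the two finite sets have equal cardinality, and then define $\phi : A \to A$ by $\phi(a) = \phi_x(a)$ whenever $a \in A_f(x)$. This is well defined because the fibers $A_f(x)$ partition $A$, and $\phi$ is a bijection of $A$ because it carries the blocks of the partition $\{A_f(x)\}$ bijectively onto the blocks of the partition $\{A_g(x)\}$ of the same set $A$. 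Finally, for $a \in A_f(x)$ we have $af(a) = x$ and $\phi(a) \in A_g(x)$, whence $\phi(a)\,g(\phi(a)) = x = af(a)$, as required. The argument is essentially bookkeeping; the only point needing care, which I expect to be the main (minor) obstacle, is verifying that gluing the fiberwise bijections $\phi_x$ yields a genuine global bijection of $A$, and the decisive fact there is that $\{A_f(x)\}_x$ and $\{A_g(x)\}_x$ are partitions of the same set $A$ with matching block sizes.
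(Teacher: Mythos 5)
Your proof is correct and follows essentially the same route as the paper's: both directions are handled by comparing the fibers $\{a : af(a)=x\}$ and $\{a : ag(a)=x\}$, which partition $A$ with equal block sizes when $m_f=m_g$, and the bijection $\phi$ is built by gluing arbitrary fiberwise bijections. Your treatment of the easy direction via the multiset identification is slightly more explicit than the paper's one-line dismissal, but the argument is the same.
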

 \begin{proof}
That the existence of a bijection $\phi $ with the stated property implies $m_f = m_g$ is immediate. For the other direction, assume that $m_f = m_g$ and, for any element $x$ in $G$, consider the following subsets of $A$.
\begin{eqnarray*}
&&A^f_x := \{ a \in A : af(a) = x\},\\
&&A^g_x := \{a \in A : ag(a) = x\} .
\end{eqnarray*}
It is clear that $\mathcal{P}^f = \{A^f_x\} _{x\in G}$ and $\mathcal{P}^g = \{A^g_x\}_{x\in G}$ give two partitions of the set $A$ and that, under the assumption $m_f = m_g, |A^f_x| = |A^g_x|$ for all $x\in G$. Now, for any $x \in G$ fix an arbitrary bijection $\phi _x$ from $A^f_x$ to $A^g_x$ and glue all this bijections to get a bijection $\phi $ from $A$ to $A$. This bijection $\phi $ satisfies the condition in the statement of the proposition. 
\end{proof}\\
Back to the linear setting, motivated by the above observation, we say that two linear isomorphisms $f,g : A\to B$ are equivalent if there exists a linear automorphism $\phi : A\to A$ such that for all $a\in A$ one has $af(a) =\phi (a)g(\phi (a))$, and two strong matchings $f, g : A \to  B$ are equivalent if they are equivalent as linear isomorphisms. Then, we define an acyclic matching from $A$ to $B$ to be a strong matching $f : A\to B$ such that for any strong matching
$g : A \to B$ that is equivalent to $f$, one has $f = cg$ for some constant $c\in K$. Finally, we say that the extension $K\subset L$ satisfies the linear acyclic matching property if for every pair $A$ and $B$ of nonzero equi-dimensional $K$-subspaces of $L$ with $AB \cap A =\{0\}$, there is at least one acyclic matching from $A$ to $B$. Now we are going to prove that every purely transcendental field extension $K\subset L$ satisfies the linear acyclic matching property. We start
with the following:
\begin{lemma}\label{l4.3}
 Let $K \subset L$ be a purely transcendental field extension, $A$ and $B$ be two nonzero finite dimensional $K$-subspaces of $L$ with the same dimension, and $f,g : A \to B$ be two equivalent linear isomorphisms from $A$ to $B$. Then either $f = cg$ for a nonzero constant $c \in K$ or $B =\alpha A$ for a nonzero constant $\alpha \in L$. Further, in the latter case, if the equivalence between $f$ and $g$ is given by means of a linear automorphism $\phi : A \to A$, then $g\circ \phi $ is the multiplication by $\alpha $ map.
 \end{lemma}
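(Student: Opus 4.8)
The plan is to reduce the equivalence between $f$ and $g$ to a single factorization identity in a polynomial ring over $L$, exploiting that $L$ is a field (being a purely transcendental extension of $K$) so that $af(a)$ is an honest product. Writing $\psi := g\circ\phi$, a $K$-linear isomorphism $A\to B$, the equivalence hypothesis reads $af(a)=\phi(a)\,\psi(a)$ for every $a\in A$. I would fix a $K$-basis $a_1,\cdots,a_n$ of $A$, introduce indeterminates $x_1,\cdots,x_n$, and form the four linear forms $P=\sum_i x_ia_i$, $Q=\sum_i x_if(a_i)$, $R=\sum_i x_i\phi(a_i)$ and $S=\sum_i x_i\psi(a_i)$ in $L[x_1,\cdots,x_n]$. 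The goal of the first step is to prove the identity $PQ=RS$.

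To establish $PQ=RS$ I would compare coefficients. The coefficient of $x_i^2$ on each side matches by the equivalence relation evaluated at $a=a_i$, while the coefficient of $x_ix_j$ (for $i\neq j$) matches after polarizing: evaluating the relation at $a_i+a_j$ and subtracting its values at $a_i$ and $a_j$ gives $a_if(a_j)+a_jf(a_i)=\phi(a_i)\psi(a_j)+\phi(a_j)\psi(a_i)$. This step is purely additive, so it is valid in every characteristic and needs no hypothesis on the size of $K$. Since $\phi$ and $\psi$ are isomorphisms and $a_1,\cdots,a_n$ is a basis, each of $P,Q,R,S$ is a nonzero linear form, hence irreducible in the UFD $L[x_1,\cdots,x_n]$.

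The heart of the argument is then unique factorization. From $PQ=RS$ with all four factors irreducible, either (i) $P=\lambda R$ and $Q=\lambda^{-1}S$, or (ii) $P=vS$ and $Q=v^{-1}R$, for some $\lambda,v\in L^{*}$. In case (ii), comparing coefficients in $P=vS$ forces $\psi(a_i)=v^{-1}a_i$ for all $i$, so $\psi=g\circ\phi$ is multiplication by $\alpha:=v^{-1}$ and $B=\psi(A)=\alpha A$; this is exactly the second alternative together with its refinement about $g\circ\phi$. In case (i), comparing coefficients in $P=\lambda R$ gives $\phi(a_i)=\lambda^{-1}a_i$, so multiplication by $\lambda^{-1}$ maps the finite-dimensional space $A$ onto itself.

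The one place where the purely transcendental hypothesis is indispensable, and the step I expect to be the crux, is finishing case (i). Since multiplication by $\lambda^{-1}$ is a $K$-linear operator on the finite-dimensional space $A$, its characteristic polynomial shows that $\lambda^{-1}$ is algebraic over $K$; because $K$ is relatively algebraically closed in any purely transcendental extension, this forces $\lambda\in K^{*}$. With $\lambda\in K$ I may use the $K$-linearity of $g$ in $Q=\lambda^{-1}S$ to compute $f(a_i)=\lambda^{-1}g(\phi(a_i))=\lambda^{-1}g(\lambda^{-1}a_i)=\lambda^{-2}g(a_i)$, so that $f=cg$ with $c=\lambda^{-2}\in K^{*}$. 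Without pure transcendence one could have $\lambda\notin K$, which is precisely the obstruction that would allow an equivalence not reducible to a scalar, so the full weight of the hypothesis rests on this algebraic-closedness fact.
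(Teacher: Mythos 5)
Your proof is correct, but it takes a genuinely different route from the paper's. Both arguments start from the same polarization identity $a_if(a_j)+a_jf(a_i)=\phi(a_i)\,\psi(a_j)+\phi(a_j)\,\psi(a_i)$ and both end with the same algebraicity step (an element acting by multiplication on a finite dimensional $K$-subspace is algebraic over $K$, hence lies in $K$ by pure transcendence). Where you diverge is in how the dichotomy is extracted. The paper fixes a nonzero $x\in A$ and shows that for every $a\in A$ the product $\bigl(x\phi(a)-a\phi(x)\bigr)\bigl(xg(\phi(a))-ag(\phi(x))\bigr)$ vanishes, so that $A$ is the union of the two subspaces $V_x$ and $W_x$; it then invokes the fact that a vector space over any field is not the union of two proper subspaces (which costs them a separate counting argument over finite fields, Remark 4.4). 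You instead package all of these pointwise identities into the single polynomial identity $PQ=RS$ in $L[x_1,\dots,x_n]$ and let unique factorization of the four nonzero (hence irreducible) linear forms produce the two cases $P=\lambda R$ or $P=vS$. Your route buys a cleaner dichotomy: the UFD argument replaces the union-of-two-subspaces lemma entirely, with no case distinction on the cardinality of $K$, and the coefficient comparison in $P=\lambda R$ (resp.\ $P=vS$) immediately yields $\phi=\lambda^{-1}\mathrm{Id}_A$ (resp.\ $g\circ\phi=v^{-1}\cdot$), including the refinement in the ``further'' clause. The paper's route is more elementary in that it never leaves $L$ itself. Both uses of commutativity and of the purely transcendental hypothesis occur at exactly the same places, so neither argument is more general than the other.
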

 \begin{proof}
 Let $\phi : A \to A$ be a linear automorphism such that for all $a \in A, af(a) =\phi (a)g(\phi (a))$. Fix a nonzero element $x$ in $A$ and consider the following subspaces of $A$.
 \begin{eqnarray*}
&&V_x :=\{ a \in A : x\phi (a) = a\phi (x)\} ,\\
&&W_x :=\{ a \in A : xg(\phi (a)) = ag(\phi (x))\} . 
 \end{eqnarray*}
We claim that $A =V_x\cup W_x$, or in other words, that for any $a\in A$ one has:
\begin{equation}\label{eq9}
(x\phi (a)-a\phi (x))(xg(\phi (a))-ag(\phi (x))=0.
\end{equation}
First of all, note that we have $(a+x)f(a+x) =\phi (a+x)g(\phi (a+x))$. Expanding this equation, using linearity of $f, g$, and $\phi $, and using $af(a) =\phi (a)g(\phi (a))$ and $xf(x) =\phi (x)g(\phi (x))$, we get:
\begin{equation}\label{eq10}
af(x) + xf(a) =\phi (a)g(\phi (x)) +\phi (x)g(\phi (a)).
\end{equation}
Then we calculate
\begin{eqnarray*}
&&0 = ax(af(x)- af(x)- xf(a) + xf(a)) \stackrel{(10)}{=}\\
&&ax(af(x) -\phi (a)g(\phi (x))-\phi (x)g(\phi (a)) + xf(a)) =\\
&&a^2xf(x)-ax\phi (a)g(\phi (x))- ax\phi (x)g(\phi (a)) + ax^2f(a) =\\
&&a^2\phi (x)g(\phi (x))- ax\phi (a)g(\phi (x))- ax\phi (x)g(\phi (a)) + x^2\phi (a)g(\phi (a)) =\\
&&(x\phi (a)-a\phi (x))(xg(\phi (a))- ag(\phi (x))).
\end{eqnarray*}
This proves equality \eqref{eq9} and therefore we have shown that $A = V_x\cup W_x$. On the other hand, as a vector space can not be the union of two proper subspaces (see the following remark), it implies that either $A = V_x$ or $A = W_x$. Let us consider each case separately.
\begin{itemize}
\item[$\bullet $]
If $A = V_x$, then for every nonzero element $a$ in $A$ we have $\dfrac{\phi (a)}{a}=\dfrac{\phi (x)}{x}$ . Therefore, $\phi = tId_A$ where $t =\dfrac{\phi (x)}{x}$ . On the other hand, being an eigenvalue of the linear operator$\phi $ acting on a finite dimensional $K$-vector space, $t$ is algebraic over $K$. But we assumed that $L$ is purely transcendental over $K$ and hence we have $t\in K$. Finally, for every nonzero element $a$ in $A$, we have:
\[
af(a) =\phi (a)g(\phi (a)) =tag(ta) =t^2ag(a).
\]
This implies that $f = cg$ for $c = t^2 \in K$.
\item[$\bullet $]
If $A = W_x$, then put$\alpha := \dfrac{g(\phi (x))}{x}\in L$ and note that for every nonzero element $a$ in $A$ we have $g(\phi (a)) =\alpha a$. This implies that:
\[
B = g(A) = g(\phi (A)) =\alpha A.
\]
\end{itemize}
This finishes the proof of lemma.
\end{proof}\\
\begin{remark}\label{re4.4}
 In the proof of the above lemma, we have used the fact that a vector space can not be the union of two proper subspaces. Note that if the base field is an infinite field, it is well known that even the union of finitely many proper subspaces do not cover a vector space. Over finite fields, this is obviously not the case, but it is still true if "finitely many" is replaced by "two", which can be deduced from the following simple counting argument. \\
Let the base field under consideration be a finite field with $q$ elements and let $V$ be an $n$-dimensional vector space over this field. Then, the cardinality of $V$ is $q^n$ and the cardinality
of any proper subspace is at most $q^{n-1}$. Since any pair of subspaces have at least the zero element in common, the union of two proper subspaces has at most $2q^{n-1}-1$ elements, which is strictly less than $q^n$ as $q\leq 2$. This implies that $V$ can not be the union of two proper subspaces.
\end{remark}
Now we are ready to prove the following result.
\begin{theorem}\label{th4.5}
 Let $L$ be a purely transcendental field extension of a field $K$. Then $K\subset  L$ satisfies the linear acyclic matching property.
 \end{theorem}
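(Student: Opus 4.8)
The plan is to produce, for any pair of nonzero equi-dimensional $K$-subspaces $A$ and $B$ with $AB\cap A=\{0\}$, an explicit strong matching and then verify its acyclicity by feeding every competing matching into the dichotomy of Lemma \ref{l4.3}. First I would record that, since $AB\cap A=\{0\}$, Theorem \ref{th4.1} guarantees both that a strong matching from $A$ to $B$ exists and that \emph{every} $K$-linear isomorphism $f:A\to B$ is already a strong matching. Thus the whole problem reduces to selecting one linear isomorphism $f$ for which every strong matching $g$ equivalent to $f$ satisfies $f=cg$ with $c\in K$. The engine is Lemma \ref{l4.3}: applied to any equivalent pair $(f,g)$ it yields the dichotomy that either $f=cg$ already (which is exactly the desired conclusion) or $B=\alpha A$ for some nonzero $\alpha\in L$, in which case the automorphism $\phi$ implementing the equivalence satisfies the structural identity that $g\circ\phi$ is a multiplication map.

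This dichotomy dictates a split into two cases according to whether $B$ is a linear translate of $A$. In the easy case, when $B\neq\alpha A$ for every $\alpha\in L^{*}$, I would simply take an arbitrary linear isomorphism $f:A\to B$; for any equivalent strong matching $g$, Lemma \ref{l4.3} is forced into its first alternative (the second being excluded by hypothesis), so $f=cg$ and $f$ is acyclic with no further work. The genuinely delicate case, which I expect to be the main obstacle, is $B=\alpha A$ for some nonzero $\alpha\in L$, where even the natural candidate can in principle admit a nonproportional equivalent partner.

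In that case the candidate acyclic matching should be the multiplication map $f=\mu_{\alpha}:A\to B$, $a\mapsto\alpha a$, which is a linear isomorphism and hence a strong matching. To test acyclicity I would take a strong matching $g$ equivalent to $f$ via an automorphism $\phi$, so that $af(a)=\phi(a)g(\phi(a))$ for all $a\in A$, and apply Lemma \ref{l4.3}; if its first alternative holds we are done, so the crux is the second alternative, where $g(\phi(a))=\alpha'' a$ for a constant $\alpha''\in L^{*}$. Substituting $f=\mu_{\alpha}$ and $g(\phi(a))=\alpha'' a$ into the equivalence relation gives $\alpha a^{2}=\alpha'' a\,\phi(a)$, and cancelling $a$ in the field $L$ forces $\phi(a)=(\alpha/\alpha'')a$ for all $a$. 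The hypothesis now enters decisively: $\phi$ is a $K$-linear automorphism of a finite-dimensional space that coincides with multiplication by $\beta:=\alpha/\alpha''\in L$, so $\beta$ is algebraic over $K$, and since the extension $K\subset L$ is purely transcendental this forces $\beta\in K$, i.e. $\phi=\beta\,\mathrm{Id}$. Feeding this back into $g\circ\phi=\mu_{\alpha''}$ yields $g=\mu_{(\alpha'')^{2}/\alpha}$, whence a direct check gives $\beta^{2}g=\mu_{\alpha}=f$ with $\beta^{2}\in K$; thus $f=cg$ here as well, so $\mu_{\alpha}$ is acyclic and the theorem follows. The single subtle point throughout is the passage from the pointwise identity $\phi(a)=(\alpha/\alpha'')a$ to the conclusion that $\phi$ is a genuine $K$-scalar, together with the eigenvalue/algebraicity argument pinning $\beta$ down to $K$ — this is precisely where pure transcendence is indispensable and is the heart of the proof.
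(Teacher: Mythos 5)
Your proposal is correct and follows essentially the same route as the paper: both reduce to Theorem \ref{th4.1} to make every linear isomorphism a strong matching, invoke the dichotomy of Lemma \ref{l4.3}, and in the case $B=\alpha A$ verify that the multiplication map $\mu_\alpha$ is acyclic via the eigenvalue/pure-transcendence argument forcing $\phi=\beta\,\mathrm{Id}$ with $\beta\in K$ and the computation $f=\beta^2 g$. The only difference is organizational (you case-split up front on whether $B$ is a translate of $A$, while the paper first tests an arbitrary $f$ and falls back to $\mu_\alpha$ only if it fails), which is immaterial.
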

\begin{proof}
 Let $A$ and $B$ be nonzero $n$-dimensional $K$-subspaces of $L$ with $AB\cap A = \{0\}$. By
Theorem \ref{th4.1} we know that any linear isomorphism from $A$ to $B$ is a strong matching and our goal is to show that at least one of these strong matchings is acyclic.\\
Fix a strong matching $f : A\to B$. If every strong matching equivalent to $f$ is of the form $cf$ for a constant $c\in K$, then $f$ is acyclic by denition and we are done. If not, let $g : A\to B$
be a strong matching equivalent to $f$ which is not of the form $cf$. Then, by Lemma \ref{l4.3}, there is a constant $\alpha \in L$ such that $B =\alpha A$. In this case, consider the multiplication by $\alpha $ map $w_\alpha : A \to B$, which is a strong matching by Theorem \ref{th4.1} We claim that $w_\alpha $ is an acyclic matching. Let $h : A\to B$ be a strong matching equivalent to $w_\alpha $, and let $\psi : A \to A$ be a linear automorphism such that for every $a\in A, ah(a) =\psi (a)w_\alpha (\psi (a))$. Then, by Lemma \ref{l4.3}, either $h =cw_\alpha $ for a nonzero constant $c\in K$ or there is a nonzero constant $\beta \in L$ such that $w_\alpha \circ \psi $ is the multiplication by $\beta $ map. In the latter case,$\psi = (\alpha ^{-1}\beta )Id_A$, and thus $\alpha ^{-1}\beta $ is algebraic over $K$ and hence is in $K$ (since $L$ is purely transcendental over $K$). But then, for every $a$ in $A$, we have:
\[
ah(a) = (\alpha ^{-1}\beta )aw_\alpha (\alpha ^{-1}\beta a)=(\alpha ^{-1}\beta )^2aw_\alpha (a).
\]
This implies that $h =(\alpha ^{-1}\beta )^2w_\alpha $, and so we are done.
\end{proof}\\
\textbf{Acknowledgments}: We would like to thank Professor Noga Alon for his useful suggestions
and comments. We would also like to thank the referee for making several useful comments

\small
\begin{center}

\end{center}

 University of Illinois at Chicago, Department of Mathematics, Stat. \& CS, 851 S Morgan
St, Chicago, \textbf{IL} 60607 

\textit{E-mail address:} \texttt{maliab2@uic.edu}\\

California Institute of Technology (Caltech), Department of Mathematics, Pasadena, CA 91125

\textit{E-mail address:} \texttt{hadian@caltech.edu}\\

Sharif University of Technology, Department of Mathematical Sciences, Tehran, IRAN, P. O. Box 11365-9415

\textit{E-mail address:} \texttt{ajafari@sharif.ir}


\begin{thebibliography}{99} 

%
\bibitem{1}
S. Akbari, M. Aliabadi: Erratum to: Matching Subspaces in a Field extension, \textit{arXiv preprint}: 1507.6983.

\bibitem{2}
M. Aliabadi, H. Jolany, A. Khajehnejad, M.J. Moghadamzadeh: A cyclicity for Groups and Vector spaces, \textit{arXiv preprint}: 1507.07143.

\bibitem{3}
M. Aliabadi, M.R. Darafsheh: On Maximal and Minimal Linear Matching Property, \textit{Algebra and Discrete Mathematics}, Volume \textbf{15} (2013). Number 2. pp. 174-178.

\bibitem{4}
N. Alon, C. K. Fan, D. Kleitman, J. Losonczy: Acyclic Matchings. \textit{Adv. Math.} \textbf{122} (1996), 234-236.

\bibitem{5}
 S. Eliahou, C. Lecouvey: Matchings in arbitrary groups. \textit{Adv. in Appl. Math.}\textbf{40} (2008), 219-224..

\bibitem{6}
S. Eliahou, C. Lecouvey: Matching subspaces in a field extension. \textit{Journal of Algebra} \textbf{324} (2010), 3420-3430.

\bibitem{7}
C. K. Fan, J. Losonczy: Matchings and canonical forms in symmetric tensors. \textit{Adv. Math.} \textbf{117} (1996), 228-238.

\bibitem{8}
J. Losonczy: On Matchings in Groups. \textit{Adv. in Appl. Math.} \textbf{20} (1998), 385-391.

\bibitem{9}
M. B. Nathanson: Additive Number Theory; Inverse Problems and the Geometry of Sumsets. Springer-Verlag, New York, Berlin, Heidelberg (1996).

\bibitem{10}
J. E. Olson: On the sum of two sets in a group. \textit{Journal of Number Theory}\textbf{18} (1984), 110-120.

\bibitem{11}
E. K. Wakeford: On Canonical Forms. \textit{Proc. London Math. Soc., no. 2,} \textbf{18} (1918-19), 403-410.


\end{thebibliography}
\end{document}